\theoremstyle{thmstyleone}%
\newtheorem{theorem}{Theorem}[section]%  meant for continuous numbers
\newtheorem{proposition}[theorem]{Proposition}%
\newtheorem{corollary}[theorem]{Corollary}%
\theoremstyle{thmstyletwo}%
\newtheorem{example}{Example}%
\newtheorem{remark}[theorem]{Remark}%
\theoremstyle{thmstylethree}%
\begin{document}

\title[Characterizing the $2$-Killing vector fields on multiply twisted product spacetimes]{Characterizing the $2$-Killing vector fields on multiply twisted product spacetimes}

%%=============================================================%%
%% Prefix	-> \pfx{Dr}
%% GivenName	-> \fnm{Joergen W.}
%% Particle	-> \spfx{van der} -> surname prefix
%% FamilyName	-> \sur{Ploeg}
%% Suffix	-> \sfx{IV}
%% NatureName	-> \tanm{Poet Laureate} -> Title after name
%% Degrees	-> \dgr{MSc, PhD}
%% \author*[1,2]{\pfx{Dr} \fnm{Joergen W.} \spfx{van der} \sur{Ploeg} \sfx{IV} \tanm{Poet Laureate}
%%                 \dgr{MSc, PhD}}\email{iauthor@gmail.com}
%%=============================================================%%

\author[]{\fnm{Adara M.} \sur{Blaga}}\email{adarablaga@yahoo.com}

\affil[]{\orgdiv{Dept. of Mathematics}, \orgname{Faculty of Mathematics and Computer Science, West University of Timi\c soara}, \postcode{300223},
\orgaddress{\city{Timi\c soara}, \country{Romania}}}

%%==================================%%
%% sample for unstructured abstract %%
%%==================================%%

\abstract{We characterize the $2$-Killing vector fields on a multiply twisted product manifold, with a special view towards generalized spacetimes.
More precisely, we determine the nonlinear differential equations that completely describe them and the twisted functions, give particular solutions, and construct examples.}

\keywords{$2$-Killing vector field, multiply twisted product manifold, spacetime}

%%\pacs[JEL Classification]{D8, H51}

\pacs[MSC Classification]{35Q51, 53B50, 83C05}

\maketitle

\section{Introduction}

The warped product manifolds, introduced in \cite{Bi} by Bishop and O'Neill, with all their subsequent generalizations, proved to be of interest in physics, especially, in the theory of relativity. The standard spacetime models such as Robertson--Walker, Schwarzschild, static and Kruskal, are all warped products. Also, the simplest models of neighborhoods of stars and black holes are warped products, too \cite{On}. The notion has been further generalized to doubly warped products \cite{erlich}, twisted and doubly twisted products \cite{ponge}, biwarped products \cite{no}, multiply warped products \cite{unal}, sequential warped products \cite{ude}, multiply twisted products \cite{wang}.

Killing vector fields with respect to a pseudo-Riemannian metric are the infinitesimal generators of the isometries. They have already been studied on different types of warped products, see for eg., \cite{wang}. As a generalization, the notion of $2$-Killing vector field has been introduced by Oprea in \cite{op}, and recently, studied on different warped product spaces: in \cite{sh} on warped products, in \cite{ac1} on doubly warped products, and in \cite{ac2} on multiply warped products.

Spacetimes, standing in physics for any four dimensional pseudo-Riemannian manifold, was initially defined as a mathematical model that includes three dimensions of space and one dimension of time. By introducing an extra dimension to the four dimensional spacetime, the five dimensional spacetimes constituted the basement for the development of Kaluza--Klein theory, a field theory that unifies the gravitation and electromagnetism, which have been later generalized to spaces with an arbitrary number of dimensions. For eg., we refer to \cite{car,oh}. One of such generalizations is the Robertson--Walker spacetime, defined as a warped product of an open and connected real interval with the metric $-dt^2$ and an arbitrary Riemannian manifold. This notion has been further extended to the generalized Robertson--Walker spacetimes introduced in \cite{aa}, by allowing for spatial non-homogeneity.
The latest include the Friedmann cosmological models, the Einstein--de Sitter spacetime, the static Einstein spacetime and the de Sitter spacetime. For more results on generalized Robertson--Walker spacetimes, see \cite{mantica}. The warped twisted generalized Robertson--Walker spacetimes extend the biwarped generalized Robertson--Walker spacetimes \cite{misa}, being products of an open and connected real interval $I$ with the metric $-dt^2$ and two arbitrary Riemannian manifolds $(M_i,g_i)$, $i=1,2$, with some twisted functions $f_i$ defined on $I\times M_i$, $i=1,2$.

The aim of the present study is to characterize Killing and $2$-Killing vector fields on multiply twisted product manifolds, with a special view towards spacetimes. More precisely, we determine the nonlinear differential equations that completely describe these vector fields and the twisted functions, give particular solutions to them, and provide examples.

\section{Multiply twisted product manifolds}

Let $(M_i,g_i)$, $i=\overline{1,n}$, be pseudo-Riemannian manifolds, and let $f_i$ for $i=\overline{2,n}$ be positive smooth functions on $M_1\times M_i$.
Then, the \textit{multiply twisted product manifold}
$$M_1\!\times_{{f_2}}\!\!M_2\times \cdots\times_{f_n}\!\!M_n=:(\tilde M,\tilde g)$$ is defined
\cite{wang} as:
$$\tilde M:=M_1\times M_2\times \cdots \times M_n, \ \ \tilde g:=\pi_1^*(g_1)+\sum_{i=2}^n\left(\pi_{1,i}^*(f_i)\right)^2\pi_i^*(g_i)$$
for $\pi_i:M_1\times M_2\times \cdots \times M_n\rightarrow M_i$ the canonical projection on $M_i$, $i=\overline{1,n}$, and $\pi_{1,i}:M_1\times M_2\times \cdots \times M_n\rightarrow M_1\times M_i$ the canonical projection on $M_1\times M_i$, $i=\overline{2,n}$. In this case, the functions $f_i$, $i=\overline{2,n}$, are called the \textit{twisted functions}. The notion of multiply twisted product manifold generalizes the \textit{multiply warped product manifold} introduced in \cite{unal}, where the functions $f_i$, $i=\overline{2,n}$, are positive smooth functions defined only on $M_1$, as well as the \textit{sequential warped product manifold} \cite{ude}, where $n=3$, the function $f_2$ is defined on $M_1$, and the function $f_3$ is defined on $M_1\times M_2$.

For all the rest of the paper, we shall use the same notation for a function on $M_i$, $i=\overline{1,n}$ (or on $M_1\times M_i$, $i=\overline{2,n}$) and its pullback on $\tilde M$, as well as for a metric on $M_i$, $i=\overline{1,n}$, and its pullback on $\tilde M$, and for a vector field on $M_i$, $i=\overline{1,n}$, and its lift on $\tilde M$. Also, we will denote by $\Gamma(TM)$ the set of smooth sections of a smooth manifold $M$.

The tangent bundle of $\tilde M$ orthogonally splits into the direct sum $T\tilde M=\oplus_{i=1}^n TM_i,$
and for any $\tilde X\in \Gamma(T\tilde M)$, we will use the notation
$\tilde X=\sum_{i=1}^nX_i,$ where $X_i\in \Gamma(TM_i)$, $i=\overline{1,n}$.

\bigskip

The first and the second Lie derivatives of the metric $\tilde g$ in the direction of a vector field $V$ on $\tilde M$, defined by:
$$(\pounds_{V}\tilde g)(X,Y):=V(\tilde g(X,Y))-\tilde g([V,X],Y)-\tilde g(X,[V,Y])$$
for any $X,Y\in \Gamma(T\tilde M)$, and, respectively, by
$$\pounds_{V}\pounds_{V}\tilde g:=\pounds_{V}(\pounds_{V}\tilde g)$$
are given as follows.

\begin{proposition}\label{p11a}
Let $(\tilde M,\tilde g)$ be a multiply twisted product manifold and let $V=\sum_{i=1}^nV_i\in \Gamma(T\tilde M)$. Then, for any $X=\sum_{i=1}^nX_i$, $Y=\sum_{i=1}^nY_i\in \Gamma(T\tilde M)$, we have:
\begin{align*}
(\pounds_{V}\tilde g)(X,Y)&=(\pounds_{V_1}g_1)(X_1,Y_1)\\
&\hspace{12pt}+\sum_{i=2}^nf_i^2(\pounds_{V_i}g_i)(X_i,Y_i)+\sum_{i=2}^n\left(V_1(f_i^2)+V_i(f_i^2)\right)g_i(X_i,Y_i),\\
(\pounds_{V}\pounds_{V}\tilde g)(X,Y)&=(\pounds_{V_1}\pounds_{V_1}g_1)(X_1,Y_1)+\sum_{i=2}^nf_i^2(\pounds_{V_i}\pounds_{V_i}g_i)(X_i,Y_i)\\
&\hspace{12pt}+2\sum_{i=2}^n\left(V_1(f_i^2)+V_i(f_i^2)\right)(\pounds_{V_i}g_i)(X_i,Y_i)\\
&\hspace{12pt}+\sum_{i=2}^n\left(V_1(V_1(f_i^2))+V_1(V_i(f_i^2))\right)g_i(X_i,Y_i).
\end{align*}
\end{proposition}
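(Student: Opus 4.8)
The plan is to reduce both computations to three structural facts about a product manifold and then to expand everything by the Leibniz rule, after which the bulk of the terms drops out. The three facts are the following. Since $\tilde g=\pi_1^*(g_1)+\sum_{i=2}^n f_i^2\,\pi_i^*(g_i)$ and the decomposition $T\tilde M=\oplus_{i=1}^n TM_i$ is $\tilde g$-orthogonal with $\tilde g(X_i,Y_j)=0$ for $i\neq j$, one has $\tilde g(X,Y)=g_1(X_1,Y_1)+\sum_{i=2}^n f_i^2\,g_i(X_i,Y_i)$. Next, the lift to $\tilde M$ of a vector field from $M_j$ commutes with the lift of a vector field from $M_k$ whenever $j\neq k$, so $[V,X]=\sum_{j,k}[V_j,X_k]=\sum_{j=1}^n[V_j,X_j]$ and the $M_j$-component of $[V,X]$ is just $[V_j,X_j]\in\Gamma(TM_j)$, and similarly for $[V,Y]$. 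Finally, a function pulled back from $M_j$ is annihilated by $V_k$ for every $k\neq j$, and a function pulled back from $M_1\times M_i$ is annihilated by $V_k$ for every $k\notin\{1,i\}$; in particular $V(g_1(X_1,Y_1))=V_1(g_1(X_1,Y_1))$, $V(g_i(X_i,Y_i))=V_i(g_i(X_i,Y_i))$, and $V(f_i^2)=V_1(f_i^2)+V_i(f_i^2)$.

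For the first identity I would start from $(\pounds_V\tilde g)(X,Y)=V(\tilde g(X,Y))-\tilde g([V,X],Y)-\tilde g(X,[V,Y])$, substitute the expansion of $\tilde g(X,Y)$, apply $V=\sum_j V_j$ term by term with the Leibniz rule, and delete every vanishing contribution using the third fact above. The $g_1$-block reassembles as $V_1(g_1(X_1,Y_1))-g_1([V_1,X_1],Y_1)-g_1(X_1,[V_1,Y_1])=(\pounds_{V_1}g_1)(X_1,Y_1)$, and each $f_i^2 g_i$-block yields $f_i^2\big(V_i(g_i(X_i,Y_i))-g_i([V_i,X_i],Y_i)-g_i(X_i,[V_i,Y_i])\big)+V(f_i^2)\,g_i(X_i,Y_i)$, that is, $f_i^2(\pounds_{V_i}g_i)(X_i,Y_i)+\big(V_1(f_i^2)+V_i(f_i^2)\big)g_i(X_i,Y_i)$; summing over $i$ gives the first displayed formula.

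For the second identity I would set $h:=\pounds_V\tilde g$, now known in closed form, and reapply $(\pounds_V h)(X,Y)=V(h(X,Y))-h([V,X],Y)-h(X,[V,Y])$, noting that the scalar coefficients appearing in $h$ are not differentiated, so $h([V,X],Y)$ and $h(X,[V,Y])$ are obtained from $h(X,Y)$ by substituting $[V_j,X_j]$ for $X_j$, respectively $[V_j,Y_j]$ for $Y_j$. Expanding $h$ into its three blocks and differentiating each by the Leibniz rule, the $g_1$-block reassembles into $(\pounds_{V_1}\pounds_{V_1}g_1)(X_1,Y_1)$ and the $f_i^2(\pounds_{V_i}g_i)$-block into $f_i^2(\pounds_{V_i}\pounds_{V_i}g_i)(X_i,Y_i)$; the heart of the argument is the cross block $\big(V_1(f_i^2)+V_i(f_i^2)\big)g_i(X_i,Y_i)$. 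Differentiating its scalar factor produces a pure $g_i(X_i,Y_i)$ term whose coefficient is a second-order $V$-derivative of $f_i^2$, which, once the third fact is used to cut down the range of the derivatives, is the coefficient recorded in the last sum; differentiating its $g_i(X_i,Y_i)$ factor and absorbing the two bracket terms from $-h([V,X],Y)-h(X,[V,Y])$ gives $\big(V_1(f_i^2)+V_i(f_i^2)\big)(\pounds_{V_i}g_i)(X_i,Y_i)$, and an identical contribution comes from differentiating the scalar factor in the $f_i^2(\pounds_{V_i}g_i)$-block, the two together accounting for the coefficient $2$ in the middle sum.

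The main obstacle is not conceptual but combinatorial: keeping the bookkeeping straight in the second expansion — which $V_k$ survives on which factor, which off-diagonal Lie brackets vanish, and which terms merge. One point deserving attention is that, in contrast with the multiply warped situation, each $f_i$ genuinely depends on both $M_1$ and $M_i$, so $V_1$ and $V_i$ both act nontrivially on $f_i^2$ and on its first derivatives; carrying all of these terms along is exactly what makes the differential constraints derived later coupled and nonlinear.
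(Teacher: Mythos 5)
Your overall strategy --- expanding $\pounds_V$ and then $\pounds_V\pounds_V$ straight from the definition, using that off-diagonal brackets of lifts vanish and that pullback functions are annihilated by the components of $V$ coming from the other factors --- is exactly the paper's, and your handling of the first identity and of the $g_1$-block, the $f_i^2(\pounds_{V_i}g_i)$-block, and the middle sum of the second identity is sound. The gap is in the very last step, where you differentiate the scalar factor $V_1(f_i^2)+V_i(f_i^2)$ of the cross block. That factor equals $V(f_i^2)$ and is (the pullback of) a function on $M_1\times M_i$, so your own ``third fact'' reduces $V$ acting on it to $V_1+V_i$, not to $V_1$: the resulting coefficient of $g_i(X_i,Y_i)$ is
\[
V\big(V(f_i^2)\big)=V_1(V_1(f_i^2))+V_1(V_i(f_i^2))+V_i(V_1(f_i^2))+V_i(V_i(f_i^2)),
\]
which exceeds the coefficient $V_1(V_1(f_i^2))+V_1(V_i(f_i^2))$ recorded in the statement by $V_i(V_1(f_i^2))+V_i(V_i(f_i^2))$. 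For a genuinely twisted $f_i$ these extra terms do not vanish, so the identification you assert at that point (``is the coefficient recorded in the last sum'') does not hold. A minimal check: take $n=2$, $\tilde g=-dt^2+f^2\,dx^2$ with $f=f(t,x)$, and $V=\frac{\partial}{\partial x}$; then a direct computation gives $\pounds_V\pounds_V\tilde g=\frac{\partial^2 (f^2)}{\partial x^2}\,dx^2$, whereas the displayed formula would give $0$ because $V_1=0$.

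You are in good company: the paper's own proof performs exactly the same contraction, replacing $V\left(V_1(f_i^2)+V_i(f_i^2)\right)$ by $V_1(V_1(f_i^2))+V_1(V_i(f_i^2))$ with no justification, so your write-up faithfully reproduces the published argument, error included. But as a derivation of the stated identity your proof does not close. To repair it you must either impose a hypothesis forcing $V_i(V_1(f_i^2))+V_i(V_i(f_i^2))=0$ (for instance $f_i$ depending only on $M_1$, i.e.\ the multiply warped case, where the stated formula is the known one), or replace the last sum by $\sum_{i=2}^n V\big(V(f_i^2)\big)\,g_i(X_i,Y_i)$ with all four second-order terms; the latter correction then propagates to Proposition 2.2, Theorem 3.3 and their consequences.
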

\begin{proof}
We have
\begin{align*}
(\pounds_{V}\tilde g)(X,Y):&=V(\tilde g(X,Y))-\tilde g([V,X],Y)-\tilde g(X,[V,Y])\\
&=V\left(g_1(X_1,Y_1)+\sum_{i=2}^nf_i^2g_i(X_i,Y_i)\right)\\
&\hspace{12pt}-\tilde g\left(\sum_{i=1}^n[V_i,X_i],\sum_{i=1}^nY_i\right)-\tilde g\left(\sum_{i=1}^nX_i,\sum_{i=1}^n[V_i,Y_i]\right)\\
&=V_1(g_1(X_1,Y_1))+\sum_{i=2}^n\left(V_1(f_i^2)+V_i(f_i^2)\right)g_i(X_i,Y_i)\\
&\hspace{12pt}+\sum_{i=2}^nf_i^2V_i(g_i(X_i,Y_i))-g_1([V_1,X_1],Y_1)-\sum_{i=2}^nf_i^2g_i([V_i,X_i],Y_i)\\
&\hspace{12pt}-g_1(X_1,[V_1,Y_1])-\sum_{i=2}^nf_i^2g_i(X_i,[V_i,Y_i])\\
&=(\pounds_{V_1}g_1)(X_1,Y_1)+\sum_{i=2}^nf_i^2(\pounds_{V_i}g_i)(X_i,Y_i)\\
&\hspace{12pt}+\sum_{i=2}^n\left(V_1(f_i^2)+V_i(f_i^2)\right)g_i(X_i,Y_i),
\end{align*}
\begin{align*}
(\pounds_{V}\pounds_{V}\tilde g)(X,Y):&=V((\pounds_{V}\tilde g)(X,Y))-(\pounds_{V}\tilde g)([V,X],Y)-(\pounds_{V}\tilde g)(X,[V,Y])\\
&=V_1\left((\pounds_{V_1}g_1)(X_1,Y_1)\right)+\sum_{i=2}^n\left(V_1(f_i^2)+V_i(f_i^2)\right)(\pounds_{V_i}g_i)(X_i,Y_i)\\
&\hspace{12pt}+\sum_{i=2}^n\left(V_1(V_1(f_i^2))+V_1(V_i(f_i^2))\right)g_i(X_i,Y_i)\\
&\hspace{12pt}+\sum_{i=2}^nf_i^2V_i((\pounds_{V_i}g_i)(X_i,Y_i))\\
&\hspace{12pt}+\sum_{i=2}^n\left(V_1(f_i^2)+V_i(f_i^2)\right)V_i(g_i(X_i,Y_i))\\
&\hspace{12pt}-(\pounds_{V_1}g_1)([V_1,X_1],Y_1)-\sum_{i=2}^nf_i^2(\pounds_{V_i}g_i)([V_i,X_i],Y_i)\\
&\hspace{12pt}-\sum_{i=2}^n\left(V_1(f_i^2)+V_i(f_i^2)\right)g_i([V_i,X_i],Y_i)\\
&\hspace{12pt}-(\pounds_{V_1}g_1)(X_1,[V_1,Y_1])-\sum_{i=2}^nf_i^2(\pounds_{V_i}g_i)(X_i,[V_i,Y_i])\\
&\hspace{12pt}-\sum_{i=2}^n\left(V_1(f_i^2)+V_i(f_i^2)\right)g_i(X_i,[V_i,Y_i])\\
&=(\pounds_{V_1}\pounds_{V_1}g_1)(X_1,Y_1)+\sum_{i=2}^nf_i^2(\pounds_{V_i}\pounds_{V_i}g_i)(X_i,Y_i)\\
&\hspace{12pt}+2\sum_{i=2}^n\left(V_1(f_i^2)+V_i(f_i^2)\right)(\pounds_{V_i}g_i)(X_i,Y_i)\\
&\hspace{12pt}+\sum_{i=2}^n\left(V_1(V_1(f_i^2))+V_1(V_i(f_i^2))\right)g_i(X_i,Y_i),
\end{align*}
and the proof is complete.
\end{proof}

We recall that a smooth vector field $V$ is called a \textit{Killing vector field} with respect to a pseudo-Riemannian metric $g$ if
$\pounds_{V}g=0$, and it is called a \textit{$2$-Killing vector field} \cite{op} if $\pounds_{V}\pounds_{V}g=0.$

From Proposition \ref{p11a}, we can state
\begin{proposition}\label{pd}
If $(\tilde M,\tilde g)$ is a multiply twisted product manifold and $V=\sum_{i=1}^nV_i$ is a smooth vector field on $\tilde M$, then:

(i) $V$ is a Killing vector field if and only if
$$\left\{
                             \begin{array}{ll}
                               \pounds_{V_1}g_1=0\\
                              \pounds_{V_i}g_i=-\frac{\displaystyle V_1(f_i^2)+V_i(f_i^2)}{\displaystyle f_i^2}g_i, \ \ (\forall) \ i=\overline{2,n}
                               \end{array}
                             \right.
;$$

(ii) $V$ is a $2$-Killing vector field if and only if
$$\left\{
                             \begin{array}{ll}
                              \pounds_{V_1} \pounds_{V_1}g_1=0\\
                              \pounds_{V_i}\pounds_{V_i}g_i=-2\frac{\displaystyle V_1(f_i^2)+V_i(f_i^2)}{\displaystyle f_i^2}\pounds_{V_i}g_i-\frac{\displaystyle V_1(V_1(f_i^2))+V_1(V_i(f_i^2))}{\displaystyle f_i^2}g_i, \ \ (\forall) \ i=\overline{2,n}
                               \end{array}
                             \right.
.$$
\end{proposition}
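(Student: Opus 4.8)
The plan is to derive both statements as immediate corollaries of the two Lie-derivative formulas in Proposition \ref{p11a}, exploiting the fact that the orthogonal splitting $T\tilde M=\oplus_{i=1}^n TM_i$ lets one evaluate the tensor identities block by block.

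For (i), I would start from the observation that $V$ is Killing precisely when $(\pounds_V\tilde g)(X,Y)=0$ for all $X,Y\in\Gamma(T\tilde M)$. Inserting the first formula of Proposition \ref{p11a} and writing $X=\sum_i X_i$, $Y=\sum_i Y_i$, the key point is that the right-hand side contains \emph{no} mixed terms: every summand pairs components lying in the same factor $M_i$. Since the expression is $C^\infty(\tilde M)$-bilinear and symmetric, the identity $(\pounds_V\tilde g)(X,Y)=0$ for all $X,Y$ is therefore equivalent to its restriction to each factor. Choosing $X,Y$ supported on $M_1$ gives $(\pounds_{V_1}g_1)(X_1,Y_1)=0$ for all $X_1,Y_1\in\Gamma(TM_1)$, i.e. $\pounds_{V_1}g_1=0$; choosing $X,Y$ supported on $M_i$ with $i\geq 2$ gives $f_i^2(\pounds_{V_i}g_i)(X_i,Y_i)+\big(V_1(f_i^2)+V_i(f_i^2)\big)g_i(X_i,Y_i)=0$, and dividing by $f_i^2>0$ (this is where positivity of the twisted functions enters) yields the second equation. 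Conversely, substituting the two equations back into the formula makes the right-hand side vanish identically, so $\pounds_V\tilde g=0$.

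For (ii), the argument is verbatim the same with the second formula of Proposition \ref{p11a} in place of the first: $V$ is $2$-Killing iff $(\pounds_V\pounds_V\tilde g)(X,Y)=0$ for all $X,Y$, the right-hand side again has no mixed terms, testing on $M_1$ forces $\pounds_{V_1}\pounds_{V_1}g_1=0$, and testing on $M_i$, $i\geq 2$, gives $f_i^2(\pounds_{V_i}\pounds_{V_i}g_i)(X_i,Y_i)+2\big(V_1(f_i^2)+V_i(f_i^2)\big)(\pounds_{V_i}g_i)(X_i,Y_i)+\big(V_1(V_1(f_i^2))+V_1(V_i(f_i^2))\big)g_i(X_i,Y_i)=0$, which after dividing by $f_i^2$ is the claimed relation; the converse is again obtained by back-substitution.

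There is no serious obstacle here: the whole content is already packaged in Proposition \ref{p11a}, and the only points deserving a word of justification are that the block-diagonal structure of the two formulas legitimizes testing on each factor separately, and that $f_i>0$ allows dividing through by $f_i^2$. The mild subtlety worth making explicit is why vanishing against \emph{all} $X,Y$ — rather than only those supported on a single factor — is equivalent to the componentwise system; this holds precisely because no cross term $g_i(X_i,Y_j)$ with $i\neq j$ ever appears on the right-hand sides.
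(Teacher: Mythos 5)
Your proposal is correct and follows exactly the route the paper takes: the paper states Proposition \ref{pd} as an immediate consequence of Proposition \ref{p11a}, and your block-by-block evaluation (testing on vectors tangent to a single factor, then dividing by $f_i^2>0$) is precisely the argument that justifies this, spelled out in slightly more detail than the paper bothers to give.
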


\section{Generalized spacetimes}

We shall consider the multiply twisted product spacetimes of the form
$$I\!\times_{{f_2}}\!\!M_2\times\cdots\times_{f_n}\!\!M_n,$$ where $I$ is an open and connected real interval equipped with the metric $g_1=-dt^2$, and $(M_i,g_i)$ is a Riemannian manifold, $i=\overline{2,n}$.

We recall the following facts.

\begin{proposition}\label{hj}{\rm\cite{ac1}}
Let $(I,g_1)$ and let $V=v\frac{\displaystyle d}{\displaystyle dt}$ be a vector field with $v$ a smooth function on $I$. Then:
\begin{align*}
\pounds_{V}g_1&=2\frac{dv}{dt}g_1;\\
\pounds_{V}\pounds_{V}g_1&=2\Big\{v\frac{d^2v}{dt^2}+2\left(\frac{dv}{dt}\right)^2\Big\}g_1.
\end{align*}

Therefore, $V$ is a Killing vector field on $(I,g_1)$ if and only if $v$ is a constant, and, $V$ is a $2$-Killing vector field on $(I,g_1)$ if and only if
$$v(t)=\sqrt[3]{c_1t+c_2}, \ \ c_1,c_2\in \mathbb{R}.$$
\end{proposition}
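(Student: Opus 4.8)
The plan is to exploit that $I$ is one-dimensional, so that $g_1=-dt^2$ and every symmetric $(0,2)$-tensor on $I$ is completely determined by its value on the single vector field $\partial_t:=\frac{d}{dt}$. First I would compute the bracket $[V,\partial_t]=[v\partial_t,\partial_t]=-\frac{dv}{dt}\,\partial_t$ and substitute it into the defining formula
\[
(\pounds_{V}g_1)(\partial_t,\partial_t)=V(g_1(\partial_t,\partial_t))-2g_1([V,\partial_t],\partial_t).
\]
Using $g_1(\partial_t,\partial_t)=-1$, the first term vanishes and the second equals $-2\frac{dv}{dt}$, so $(\pounds_{V}g_1)(\partial_t,\partial_t)=2\frac{dv}{dt}\,g_1(\partial_t,\partial_t)$, i.e. $\pounds_{V}g_1=2\frac{dv}{dt}\,g_1$ as tensors. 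For the second Lie derivative I would then differentiate once more via the Leibniz rule $\pounds_{V}(hg_1)=V(h)g_1+h\,\pounds_{V}g_1$, valid for any smooth function $h$; taking $h=2\frac{dv}{dt}$ gives
\[
\pounds_{V}\pounds_{V}g_1=V\!\left(2\frac{dv}{dt}\right)g_1+2\frac{dv}{dt}\left(2\frac{dv}{dt}\,g_1\right)=2\left\{v\frac{d^2v}{dt^2}+2\left(\frac{dv}{dt}\right)^2\right\}g_1,
\]
which is the stated formula. (Alternatively, one can read both identities off Proposition \ref{p11a} and Proposition \ref{pd} with $n=1$, but the direct one-dimensional computation is cleaner.)

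Since $g_1$ is nondegenerate, $\pounds_{V}g_1=0$ is equivalent to $\frac{dv}{dt}=0$, i.e. $v$ constant, and $\pounds_{V}\pounds_{V}g_1=0$ is equivalent to the nonlinear ODE $v\frac{d^2v}{dt^2}+2\left(\frac{dv}{dt}\right)^2=0$. The only step that is not a routine computation is solving this equation, and the key observation is that its left-hand side is, up to a factor, the second derivative of $v^3$: indeed $\frac{d}{dt}(v^3)=3v^2\frac{dv}{dt}$ and $\frac{d^2}{dt^2}(v^3)=3v\bigl(2(\tfrac{dv}{dt})^2+v\frac{d^2v}{dt^2}\bigr)$. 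Hence, on any open subinterval where $v$ does not vanish, the ODE is equivalent to $\frac{d^2}{dt^2}(v^3)=0$, which integrates twice to $v^3=c_1t+c_2$, that is $v(t)=\sqrt[3]{c_1t+c_2}$ with $c_1,c_2\in\mathbb{R}$; conversely any such $v$ visibly solves the equation wherever it is smooth.

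I expect the only subtlety to be the small amount of care needed at the zeros of $v$: a smooth solution of $v\frac{d^2v}{dt^2}+2(\tfrac{dv}{dt})^2=0$ on all of $I$ that vanishes at some point must either be the constant solution $v\equiv0$ (the case $c_1=0$), or else $c_1t+c_2$ keeps one sign on $I$, so that the cube root is genuinely smooth there. This I would settle by a short connectedness argument on the interval $I$ together with continuity of $v$ and its derivatives; everything else is bookkeeping with the one-dimensional metric $g_1=-dt^2$.
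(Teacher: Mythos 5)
Your computation is correct and complete: the bracket $[v\partial_t,\partial_t]=-\frac{dv}{dt}\partial_t$, the resulting identity $\pounds_V g_1=2\frac{dv}{dt}g_1$, the Leibniz step for $\pounds_V\pounds_V g_1$, and the integration of $v\frac{d^2v}{dt^2}+2\left(\frac{dv}{dt}\right)^2=0$ via the observation that this is $\tfrac13 v^{-1}\frac{d^2}{dt^2}(v^3)$ all check out. The paper itself offers no proof of this proposition --- it is quoted verbatim from \cite{ac1} --- so there is nothing to compare against; your argument is the natural one and supplies the missing justification. One small simplification: the identity $\frac{d^2}{dt^2}(v^3)=3v\bigl(v\frac{d^2v}{dt^2}+2(\frac{dv}{dt})^2\bigr)$ shows that the ODE forces $\frac{d^2}{dt^2}(v^3)=0$ on \emph{all} of $I$, not just where $v\neq 0$, so $v^3$ is affine on $I$ outright and your separate connectedness argument at the zeros of $v$ can be dispensed with (one only needs to note, as you do, that smoothness of $v=\sqrt[3]{c_1t+c_2}$ forces $c_1=0$ or $-c_2/c_1\notin I$).
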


\begin{remark}\label{pl}{\rm\cite{ac1}}
Similarly, if we consider $(I,dx^2)$ instead of $(I,-dt^2)$, all the assertions from Proposition \ref{hj} remain true.
\end{remark}

Now we can state
\begin{theorem}\label{t}
Let $I\!\times_{{f_2}}\!\!M_2\times\cdots\times_{f_n}\!\!M_n$ be a multiply twisted product spacetime with the metric $-dt^2+\sum_{i=2}^nf_i^2g_i$. Then:

(i) $V=\sum_{i=1}^nV_i$ is a Killing vector field on $I\!\times_{{f_2}}\!\!M_2\times\cdots\times_{f_n}\!\!M_n$ if and only if
$$\left\{
                             \begin{array}{ll}
                               V_1=c\frac{\displaystyle d}{\displaystyle dt} \  \ c\in \mathbb{R}\\
                              \pounds_{V_i}g_i=-\frac{\displaystyle 1}{\displaystyle f_i^2}\Big\{c\frac{\displaystyle \partial f_i^2}{\displaystyle \partial t}+V_i(f_i^2)\Big\}g_i, \ \ (\forall) \ i=\overline{2,n}
                               \end{array}
                             \right.
;$$

(ii) $V=\sum_{i=1}^nV_i$ is a $2$-Killing vector field on $I\!\times_{{f_2}}\!\!M_2\times \cdots\times_{f_n}\!\!M_n$ if and only if
$$\left\{
                             \begin{array}{ll}
                               V_1=\sqrt[3]{c_1t+c_2}\frac{\displaystyle d}{\displaystyle dt}, \ \ c_1,c_2\in \mathbb{R}\\
                              \pounds_{V_i}\pounds_{V_i}g_i=-\frac{\displaystyle 2}{\displaystyle f_i^2}\Big\{\sqrt[3]{c_1t+c_2}\frac{\displaystyle \partial f_i^2}{\displaystyle \partial t}+V_i(f_i^2)\Big\}\pounds_{V_i}g_i\\
                              \hspace{12pt}-\frac{\displaystyle 1}{\displaystyle f_i^2}\sqrt[3]{c_1t+c_2}\frac{\displaystyle \partial}{\displaystyle \partial t}\Big(\sqrt[3]{c_1t+c_2}\frac{\displaystyle \partial f_i^2}{\displaystyle \partial t}+V_i(f_i^2)\Big)g_i, \ \ (\forall) \ i=\overline{2,n}
                               \end{array}
                             \right..$$
\end{theorem}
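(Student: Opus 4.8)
The plan is to specialize Proposition \ref{pd} to the case at hand, in which $M_1=I$ carries the metric $g_1=-dt^2$ and each $(M_i,g_i)$, $i=\overline{2,n}$, is Riemannian, and then to use Proposition \ref{hj} to resolve the first component equation in each of the two items.

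For (i), write $V_1=v\frac{d}{dt}$ with $v$ a smooth function on $I$, as in Proposition \ref{hj}. By Proposition \ref{pd}(i), $V$ is a Killing vector field if and only if $\pounds_{V_1}g_1=0$ and $\pounds_{V_i}g_i=-\frac{1}{f_i^2}\left(V_1(f_i^2)+V_i(f_i^2)\right)g_i$ for every $i=\overline{2,n}$. By Proposition \ref{hj}, the first equation holds exactly when $v$ is a constant $c\in\mathbb{R}$, i.e. $V_1=c\frac{d}{dt}$; and then, since $f_i^2$ is a function on $I\times M_i$, one has $V_1(f_i^2)=c\frac{\partial f_i^2}{\partial t}$, so the remaining equations become precisely those displayed in (i). The converse direction is obtained by reading the same chain of equivalences backwards.

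For (ii), again set $V_1=v\frac{d}{dt}$. By Proposition \ref{pd}(ii), $V$ is a $2$-Killing vector field if and only if $\pounds_{V_1}\pounds_{V_1}g_1=0$ together with $\pounds_{V_i}\pounds_{V_i}g_i=-2\frac{V_1(f_i^2)+V_i(f_i^2)}{f_i^2}\pounds_{V_i}g_i-\frac{V_1(V_1(f_i^2))+V_1(V_i(f_i^2))}{f_i^2}g_i$ for every $i=\overline{2,n}$. Proposition \ref{hj} identifies the first condition as $v(t)=\sqrt[3]{c_1t+c_2}$, $c_1,c_2\in\mathbb{R}$, hence $V_1=\sqrt[3]{c_1t+c_2}\frac{d}{dt}$. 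To put the remaining equations into the stated form I would use that $V_1$ acts on any function $h$ on $I\times M_i$ by $V_1(h)=v\frac{\partial h}{\partial t}$, so that $V_1(f_i^2)=v\frac{\partial f_i^2}{\partial t}$ and
$$V_1(V_1(f_i^2))+V_1(V_i(f_i^2))=v\frac{\partial}{\partial t}\!\left(v\frac{\partial f_i^2}{\partial t}\right)+v\frac{\partial}{\partial t}\!\left(V_i(f_i^2)\right)=v\frac{\partial}{\partial t}\!\left(v\frac{\partial f_i^2}{\partial t}+V_i(f_i^2)\right),$$
with $v=\sqrt[3]{c_1t+c_2}$; substituting these expressions yields the displayed equation for $\pounds_{V_i}\pounds_{V_i}g_i$, and conversely.

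The computation is essentially routine once Propositions \ref{pd} and \ref{hj} are in hand; the only points needing a little care are keeping the domains straight — $v$ depends on $t$ alone, so $\frac{dv}{dt}$ is an ordinary derivative, whereas $f_i^2$ is a function on $I\times M_i$, so $\frac{\partial f_i^2}{\partial t}$ is a partial one — and the regrouping identity above, which is exactly what allows the second-order term to be written as the single derivative $\frac{\partial}{\partial t}\!\left(\sqrt[3]{c_1t+c_2}\,\frac{\partial f_i^2}{\partial t}+V_i(f_i^2)\right)$ appearing in the statement.
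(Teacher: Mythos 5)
Your proof is correct and follows exactly the route the paper takes: the paper's own proof is the one-line remark that the theorem follows from Propositions \ref{pd} and \ref{hj}, and you have simply filled in the substitution $V_1=v\frac{d}{dt}$, the identification of $v$ via Proposition \ref{hj}, and the regrouping $V_1(V_1(f_i^2))+V_1(V_i(f_i^2))=v\frac{\partial}{\partial t}\big(v\frac{\partial f_i^2}{\partial t}+V_i(f_i^2)\big)$, all of which are accurate.
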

\begin{proof}
The two assertions follow from Propositions \ref{pd} and \ref{hj}.
\end{proof}

In particular, if we consider $M_i=I_i$ with $I_i$ an open and connected real interval, denote by $x_i$ the coordinate function on $I_i$ and consider $g_i=dx_i^2$, $i=\overline{2,n}$, then, by means of Remark \ref{pl} and Theorem \ref{t}, we have

\begin{proposition}
A smooth vector field $V=v_1\frac{\displaystyle d}{\displaystyle dt}+\sum_{i=2}^nv_i\frac{\displaystyle d}{\displaystyle dx_i}$ with $v_1$ and $v_i$ smooth functions depending on the coordinate function $t$ on $I$ and $x_i$ on $I_i$ respectively, $i=\overline{2,n}$, on the multiply twisted product spacetime $I\!\times_{{f_2}}\!\!I_2\times \cdots\times_{f_n}\!\!I_n$ with the metric $-dt^2+\sum_{i=2}^nf_i^2dx_i^2$, is

(i) a Killing vector field if and only if
$$\left\{
    \begin{array}{ll}
      v_1=c, \ \ c\in \mathbb{R} \\
      2\frac{\displaystyle dv_i}{\displaystyle dx_i}=-\frac{\displaystyle 1}{\displaystyle f_i^2}\Big\{c\frac{\displaystyle \partial f_i^2}{\displaystyle \partial t}+v_i\frac{\displaystyle \partial f_i^2}{\displaystyle \partial x_i}\Big\}, \ \ (\forall) \ \ i=\overline{2,n}
    \end{array}
  \right.;
$$

(ii) a $2$-Killing vector field if and only if
$$\left\{
    \begin{array}{ll}
      v_1=\sqrt[3]{c_1t+c_2}, \ \ c_1,c_2\in \mathbb{R} \\
      2\Big\{v_i\frac{\displaystyle d^2v_i}{\displaystyle dx_i^2}+2\Big(\frac{\displaystyle dv_i}{\displaystyle dx_i}\Big)^2\Big\}=-\frac{\displaystyle 4}{\displaystyle f_i^2}\Big\{\sqrt[3]{c_1t+c_2}\frac{\displaystyle \partial f_i^2}{\displaystyle \partial t}+v_i\frac{\displaystyle \partial f_i^2}{\displaystyle \partial x_i}\Big\}\frac{\displaystyle dv_i}{\displaystyle dx_i}\\
\hspace{12pt}-\frac{\displaystyle 1}{\displaystyle f_i^2}\sqrt[3]{c_1t+c_2}\Big\{\frac{\displaystyle \partial}{\displaystyle \partial t}\Big(\sqrt[3]{c_1t+c_2}\frac{\displaystyle \partial f_i^2}{\displaystyle \partial t}\Big)+v_i\frac{\displaystyle \partial^2 f_i^2}{\displaystyle \partial t\partial x_i}\Big\}, \ \ (\forall) \ \ i=\overline{2,n}
    \end{array}
  \right..
$$
\end{proposition}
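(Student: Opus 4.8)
The plan is to obtain this Proposition as a direct specialization of Theorem \ref{t} to the case $M_i=I_i$, $g_i=dx_i^2$, $V_i=v_i\frac{d}{dx_i}$, translating the tensorial identities of that theorem into the scalar PDEs of the statement by pairing with $\frac{d}{dx_i}$. First I would record the elementary per-factor facts. By Remark \ref{pl} (the one-dimensional computation of Proposition \ref{hj} with $dx_i^2$ in place of $-dt^2$), for $V_i=v_i\frac{d}{dx_i}$ with $v_i=v_i(x_i)$ one has
$$\pounds_{V_i}g_i=2\frac{dv_i}{dx_i}\,g_i,\qquad \pounds_{V_i}\pounds_{V_i}g_i=2\Big\{v_i\frac{d^2v_i}{dx_i^2}+2\Big(\frac{dv_i}{dx_i}\Big)^2\Big\}g_i.$$
Next, since $V_1=v_1\frac{d}{dt}$ acts only in the $t$-direction and $f_i^2$ is a function on $I\times I_i$, we have $V_1(f_i^2)=v_1\frac{\partial f_i^2}{\partial t}$ and $V_i(f_i^2)=v_i\frac{\partial f_i^2}{\partial x_i}$; moreover, because $v_i$ depends on $x_i$ only, $\frac{\partial}{\partial t}\big(V_i(f_i^2)\big)=v_i\frac{\partial^2 f_i^2}{\partial t\,\partial x_i}$.

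Then I would substitute these expressions into Theorem \ref{t}. For (i): the first component equation $V_1=c\frac{d}{dt}$ reads $v_1=c$; the $i$-th equation $\pounds_{V_i}g_i=-f_i^{-2}\{c\,\partial_t f_i^2+V_i(f_i^2)\}g_i$, after inserting the formula for $\pounds_{V_i}g_i$ and for $V_i(f_i^2)$ and cancelling the common factor $g_i$, becomes exactly
$$2\frac{dv_i}{dx_i}=-\frac{1}{f_i^2}\Big\{c\frac{\partial f_i^2}{\partial t}+v_i\frac{\partial f_i^2}{\partial x_i}\Big\}.$$
For (ii): likewise $V_1=\sqrt[3]{c_1t+c_2}\,\frac{d}{dt}$ gives $v_1=\sqrt[3]{c_1t+c_2}$, and the $i$-th equation of Theorem \ref{t}(ii), after substituting the formulas for $\pounds_{V_i}\pounds_{V_i}g_i$, $\pounds_{V_i}g_i$, $V_i(f_i^2)$ and $\partial_t(V_i(f_i^2))$ and again cancelling $g_i$, collapses precisely to the displayed second equation of (ii). The converse directions are immediate: given smooth $v_1,v_i$ satisfying the scalar systems, the same substitutions read backwards reproduce the tensor equations of Theorem \ref{t}, so $V$ is Killing, resp.\ $2$-Killing.

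There is no genuinely hard step here; the proof is bookkeeping, and only two points deserve explicit mention. First, one must justify that cancelling $g_i$ is legitimate, i.e.\ that the tensorial equations of Theorem \ref{t} restricted to the one-dimensional factor $I_i$ are equivalent to the scalar equations obtained by evaluating on $\frac{d}{dx_i}$; this holds because on $I_i$ both sides of each identity are function multiples of $g_i=dx_i^2$, and a symmetric $(0,2)$-tensor on a one-dimensional manifold is determined by its value on a single nonvanishing vector. Second, one must keep careful track of the variable dependence — in particular that $v_i=v_i(x_i)$, so that the inner $t$-derivative $\frac{\partial}{\partial t}\big(\cdot\big)$ appearing in Theorem \ref{t}(ii) passes through $v_i$ and produces the term $v_i\frac{\partial^2 f_i^2}{\partial t\,\partial x_i}$ in the statement. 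With these observations in place the Proposition follows by direct specialization, and I would present it that way.
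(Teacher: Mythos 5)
Your proof is correct and follows essentially the same route as the paper, which obtains this proposition directly "by means of Remark \ref{pl} and Theorem \ref{t}" via exactly the specialization and substitution you describe. Your two explicit remarks (why cancelling the nonvanishing $g_i=dx_i^2$ is legitimate on a one-dimensional factor, and that $v_i$ being independent of $t$ yields the mixed-derivative term) simply spell out what the paper leaves implicit.
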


And furthermore
\begin{corollary}\label{cor1}
A smooth vector field $V=\sqrt[3]{c_1t+c_2}\frac{\displaystyle d}{\displaystyle dt}+\sum_{i=2}^n\sqrt[3]{\hat c_ix_i+\check c_i}\frac{\displaystyle d}{\displaystyle d x_i}$ with $c_1, c_2, \hat c_i, \check c_i\in \mathbb{R}$, $i=\overline{2,n}$, on the multiply twisted product spacetime $I\!\times_{{f_2}}\!\!I_2\times \cdots\times_{f_n}\!\!I_n$ with the metric $-dt^2+\sum_{i=2}^nf_i^2dx_i^2$, is

(i) a Killing vector field if and only if
$$\left\{
    \begin{array}{ll}
      c_1=0 \\
     2 \frac{\displaystyle d}{\displaystyle dx_i}(\sqrt[3]{\hat c_ix_i+\check c_i})=-\frac{\displaystyle 1}{\displaystyle f_i^2}\Big\{\sqrt[3]{c_2}\frac{\displaystyle \partial f_i^2}{\displaystyle \partial t}+\sqrt[3]{\hat c_ix_i+\check c_i}\frac{\displaystyle \partial f_i^2}{\displaystyle \partial x_i}\Big\}, \ \ (\forall) \ \ i=\overline{2,n}
    \end{array}
  \right.;
$$

(ii) a $2$-Killing vector field if and only if
\begin{align*}4\Big\{\sqrt[3]{c_1t+c_2}\frac{\displaystyle \partial f_i^2}{\displaystyle \partial t}&+\sqrt[3]{\hat c_ix_i+\check c_i}\frac{\displaystyle \partial f_i^2}{\displaystyle \partial x_i}\Big\}
\frac{\displaystyle d}{\displaystyle dx_i}(\sqrt[3]{\hat c_ix_i+\check c_i})\\
&\hspace{-25pt}+\sqrt[3]{c_1t+c_2}\Big\{\frac{\displaystyle \partial}{\displaystyle \partial t}\Big(\sqrt[3]{c_1t+c_2}\frac{\displaystyle \partial f_i^2}{\displaystyle \partial t}\Big)+\sqrt[3]{\hat c_ix_i+\check c_i}\frac{\displaystyle \partial^2 f_i^2}{\displaystyle \partial t\partial x_i}\Big\}=0, \ \ (\forall) \ \ i=\overline{2,n}
    .
    \end{align*}
\end{corollary}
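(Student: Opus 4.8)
The plan is to derive Corollary \ref{cor1} directly from the preceding Proposition by substituting the explicit choices $v_1=\sqrt[3]{c_1t+c_2}$ and $v_i=\sqrt[3]{\hat c_ix_i+\check c_i}$ into the two systems of differential equations that characterize Killing and $2$-Killing vector fields there. So the argument is a verification rather than a fresh computation: the work is already encoded in Propositions \ref{hj} and \ref{t}, and I only need to exploit the special algebraic features of the cube-root profiles.

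For part (i), I would begin from the Killing system, whose first equation demands $v_1=c$ for some constant $c$. Since $v_1=\sqrt[3]{c_1t+c_2}$, this is possible on the interval $I$ if and only if $c_1=0$, in which case $c=\sqrt[3]{c_2}$. Feeding $c_1=0$, $c=\sqrt[3]{c_2}$ and $v_i=\sqrt[3]{\hat c_ix_i+\check c_i}$ into the second equation of that system, and writing $\frac{dv_i}{dx_i}=\frac{d}{dx_i}\big(\sqrt[3]{\hat c_ix_i+\check c_i}\big)$, reproduces verbatim the stated condition. This is the whole of (i).

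For part (ii), the first equation of the $2$-Killing system is $v_1=\sqrt[3]{c_1t+c_2}$, which is satisfied by construction, so only the second equation constrains $V$. The key observation is that, by Proposition \ref{hj} together with Remark \ref{pl}, the vector field $\sqrt[3]{\hat c_ix_i+\check c_i}\frac{d}{dx_i}$ is a $2$-Killing vector field on $(I_i,dx_i^2)$; equivalently, $v_i\frac{d^2v_i}{dx_i^2}+2\big(\frac{dv_i}{dx_i}\big)^2\equiv 0$. Hence the left-hand side of the second equation of the $2$-Killing system vanishes identically, and what survives is
$$0=-\frac{4}{f_i^2}\Big\{\sqrt[3]{c_1t+c_2}\,\frac{\partial f_i^2}{\partial t}+v_i\,\frac{\partial f_i^2}{\partial x_i}\Big\}\frac{dv_i}{dx_i}-\frac{1}{f_i^2}\sqrt[3]{c_1t+c_2}\Big\{\frac{\partial}{\partial t}\Big(\sqrt[3]{c_1t+c_2}\,\frac{\partial f_i^2}{\partial t}\Big)+v_i\,\frac{\partial^2 f_i^2}{\partial t\,\partial x_i}\Big\}.$$
Multiplying through by $-f_i^2$ and substituting $v_i=\sqrt[3]{\hat c_ix_i+\check c_i}$ and $\frac{dv_i}{dx_i}=\frac{d}{dx_i}\big(\sqrt[3]{\hat c_ix_i+\check c_i}\big)$ yields exactly the displayed equation of the corollary.

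I do not expect a genuine obstacle. The only points requiring mild care are bookkeeping ones: noting that $c_1=0$ is \emph{forced} in the Killing case whereas the corresponding first equation is \emph{automatic} in the $2$-Killing case, keeping straight which equations of the two systems collapse and which persist, and observing that the substitution $v_i=\sqrt[3]{\hat c_ix_i+\check c_i}$ is legitimate as a smooth profile on $I_i$ on the locus where the radicand does not vanish. Everything else is a direct insertion into Propositions \ref{hj} and \ref{t}.
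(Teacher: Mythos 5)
Your proposal is correct and is exactly the argument the paper intends: the corollary is obtained by substituting $v_1=\sqrt[3]{c_1t+c_2}$ and $v_i=\sqrt[3]{\hat c_ix_i+\check c_i}$ into the preceding Proposition, with the key observation that $v_i\frac{d^2v_i}{dx_i^2}+2\big(\frac{dv_i}{dx_i}\big)^2\equiv 0$ kills the left-hand side of the $2$-Killing equation and that $c_1=0$ is forced only in the Killing case. Your bookkeeping (including the replacement $c=\sqrt[3]{c_2}$ in part (i) and the multiplication by $-f_i^2$ in part (ii)) matches the stated conclusions.
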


\section{Particular cases and examples}

Based on the above properties, we shall construct examples of $2$-Killing vector fields on certain multiply twisted product spacetimes.

\begin{proposition}\label{lk}
Let $V=\sqrt[3]{c_1t+c_2}\frac{\displaystyle d}{\displaystyle dt}+\sum_{i=2}^nk_i\frac{\displaystyle d}{\displaystyle dx_i}$ with $c_1, c_2,k_i\in \mathbb{R}$, $i=\overline{2,n}$, be a smooth vector field on the multiply twisted product spacetime $I\!\times_{{f_2}}\!\!I_2\times \cdots\times_{f_n}\!\!I_n$ with the metric $-dt^2+\sum_{i=2}^nf_i^2dx_i^2$. Then:

(i) $V$ is a Killing vector field if and only if
$$\left\{
    \begin{array}{ll}
      c_1=0 \\
      \sqrt[3]{c_2}\frac{\displaystyle \partial f_i^2}{\displaystyle \partial t}+k_i\frac{\displaystyle \partial f_i^2}{\displaystyle \partial x_i}=0, \ \ (\forall) \ \ i=\overline{2,n}
    \end{array}
  \right.;
$$

(ii) if $-\frac{\displaystyle c_2}{\displaystyle c_1}\notin I$, then, $V$ is a $2$-Killing vector field if and only if the twisted functions $f_i:I\times I_i\rightarrow (0,\infty)$ satisfy
$$\frac{\displaystyle \partial^2 f_i^2}{\displaystyle \partial t^2}+\frac{k_i}{\sqrt[3]{c_1t+c_2}}\frac{\displaystyle \partial^2 f_i^2}{\displaystyle \partial t\partial x_i}
      +\frac{\displaystyle c_1}{\displaystyle 3(c_1t+c_2)}\frac{\displaystyle \partial f_i^2}{\displaystyle \partial t}=0, \ \ (\forall) \ \ i=\overline{2,n}
    .$$
\end{proposition}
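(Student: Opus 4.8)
The plan is to obtain both statements as immediate specializations of the general characterizations already proved, by taking the fibre coefficients constant. Concretely, the vector field $V$ here is exactly the one in Corollary \ref{cor1} with $\hat c_i=0$ and $\check c_i=k_i^3$, so that $\sqrt[3]{\hat c_ix_i+\check c_i}=k_i$ and $\frac{d}{dx_i}\big(\sqrt[3]{\hat c_ix_i+\check c_i}\big)=0$ for every $i=\overline{2,n}$; equivalently, one may quote Theorem \ref{t} directly, together with Proposition \ref{hj} and Remark \ref{pl}. I would present it via Theorem \ref{t}, since that keeps the two Lie-derivative identities explicit.

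For (i): By Theorem \ref{t}(i), $V$ is a Killing vector field iff $V_1=c\,\frac{d}{dt}$ for some $c\in\mathbb{R}$ and $\pounds_{V_i}g_i=-\frac{1}{f_i^2}\big\{c\,\frac{\partial f_i^2}{\partial t}+V_i(f_i^2)\big\}g_i$ for all $i$. Since $I$ is connected, $\sqrt[3]{c_1t+c_2}$ is constant on $I$ precisely when $c_1=0$, in which case $c=\sqrt[3]{c_2}$; this gives the first equation. For the second, on $(I_i,dx_i^2)$ with $V_i=k_i\frac{d}{dx_i}$ and $k_i$ constant, Proposition \ref{hj} via Remark \ref{pl} yields $\pounds_{V_i}g_i=0$, while $V_i(f_i^2)=k_i\frac{\partial f_i^2}{\partial x_i}$; since $f_i^2>0$ and $g_i$ is nondegenerate (hence nonzero at each point), the Lie-derivative equation collapses to $\sqrt[3]{c_2}\,\frac{\partial f_i^2}{\partial t}+k_i\frac{\partial f_i^2}{\partial x_i}=0$, as claimed.

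For (ii): By Theorem \ref{t}(ii), $V$ is $2$-Killing iff $V_1=\sqrt[3]{c_1t+c_2}\,\frac{d}{dt}$ (true by the definition of $V$) and, for each $i$,
$$\pounds_{V_i}\pounds_{V_i}g_i=-\frac{2}{f_i^2}\Big\{\sqrt[3]{c_1t+c_2}\,\frac{\partial f_i^2}{\partial t}+V_i(f_i^2)\Big\}\pounds_{V_i}g_i-\frac{1}{f_i^2}\sqrt[3]{c_1t+c_2}\,\frac{\partial}{\partial t}\Big(\sqrt[3]{c_1t+c_2}\,\frac{\partial f_i^2}{\partial t}+V_i(f_i^2)\Big)g_i.$$
Again $\pounds_{V_i}g_i=0$ and $\pounds_{V_i}\pounds_{V_i}g_i=0$ by Remark \ref{pl}, so the first term on the right vanishes, and with $V_i(f_i^2)=k_i\frac{\partial f_i^2}{\partial x_i}$, $f_i^2>0$ and $g_i$ nonzero the identity reduces to
$$\sqrt[3]{c_1t+c_2}\,\Big(\frac{\partial}{\partial t}\Big(\sqrt[3]{c_1t+c_2}\,\frac{\partial f_i^2}{\partial t}\Big)+k_i\,\frac{\partial^2 f_i^2}{\partial t\,\partial x_i}\Big)=0.$$
Here I would invoke the hypothesis $-\frac{c_2}{c_1}\notin I$: since $I$ is connected and $t\mapsto c_1t+c_2$ is continuous and nowhere zero on $I$, the factor $\sqrt[3]{c_1t+c_2}$ is nowhere zero and can be cancelled. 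Expanding $\frac{\partial}{\partial t}\big(\sqrt[3]{c_1t+c_2}\,\frac{\partial f_i^2}{\partial t}\big)=\frac{c_1}{3}(c_1t+c_2)^{-2/3}\frac{\partial f_i^2}{\partial t}+\sqrt[3]{c_1t+c_2}\,\frac{\partial^2 f_i^2}{\partial t^2}$ and dividing once more by $\sqrt[3]{c_1t+c_2}$ produces
$$\frac{\partial^2 f_i^2}{\partial t^2}+\frac{k_i}{\sqrt[3]{c_1t+c_2}}\,\frac{\partial^2 f_i^2}{\partial t\,\partial x_i}+\frac{c_1}{3(c_1t+c_2)}\,\frac{\partial f_i^2}{\partial t}=0.$$
All the steps are equivalences, which gives the stated iff.

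\emph{Main obstacle.} There is essentially no analytic difficulty: the argument is a direct substitution into Theorem \ref{t} followed by elementary differentiation and the vanishing of $\pounds_{V_i}g_i$. The only point requiring care — and the sole place the hypothesis $-\frac{c_2}{c_1}\notin I$ is used — is the two cancellations of $\sqrt[3]{c_1t+c_2}$; without that hypothesis the cube root may vanish at an interior point of $I$, and one would then recover the stated PDE only on $I$ with that point removed, with no control at the zero itself.
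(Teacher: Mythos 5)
Your proof is correct and follows essentially the same route as the paper: the paper deduces Proposition \ref{lk} directly from Corollary \ref{cor1} (i.e.\ the specialization $\hat c_i=0$, $\check c_i=k_i^3$ you mention), and your presentation via Theorem \ref{t} together with $\pounds_{V_i}g_i=0$ for constant $k_i$ is just that same argument written one step more explicitly. The cancellation of $\sqrt[3]{c_1t+c_2}$ under the hypothesis $-\frac{c_2}{c_1}\notin I$ is exactly where that hypothesis is used, as you note.
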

\begin{proof}
The two assertions follow from Corollary \ref{cor1}.
\end{proof}

\begin{remark}\label{poli2}
Using the notations from Proposition \ref{lk}, if the functions $f_i$ are of the form
$$f_i^2(t,x_i)=e^{a_ix_i+b_i}h(t)$$
with $a_i\in \mathbb{R}^*$, $b_i\in \mathbb{R}$, $i=\overline{2,n}$, and $h$ a strictly monotone positive smooth function on $I$ satisfying
$$\frac{d^2h}{dt^2}+\Big(\frac{a_ik_i}{\sqrt[3]{c_1t+c_2}}+\frac{\displaystyle c_1}{\displaystyle 3(c_1t+c_2)}\Big)\frac{dh}{dt}=0, \ \ (\forall) \ \ i=\overline{2,n}$$
with $c_1\neq 0$ and $-\frac{\displaystyle c_2}{\displaystyle c_1}\notin I$,
then,
$$\frac{\frac{\displaystyle d^2h}{\displaystyle dt^2}}{\frac{\displaystyle dh}{\displaystyle dt}}=-\Big(\frac{a_ik_i}{\sqrt[3]{c_1t+c_2}}+\frac{\displaystyle c_1}{\displaystyle 3(c_1t+c_2)}\Big),$$
which, by integration, gives
$$\ln \Big|\frac{dh}{dt}\Big|=-\Big(\frac{3a_ik_i\sqrt[3]{(c_1t+c_2)^2}}{2c_1}+\frac{1}{3}\ln (|c_1t+c_2|)\Big)+c_0$$
with $c_0\in \mathbb{R}$,
hence,
$$\Big|\frac{dh}{dt}\Big|=\frac{e^{c_0}}{\sqrt[3]{|c_1t+c_2|}e^{\frac{3a_ik_i\sqrt[3]{(c_1t+c_2)^2}}{2c_1}}}.$$

If $k_i\neq 0$, we get
$$f_i(t,x_i)=e^{\frac{a_ix_i+b_i}{2}}\sqrt{\Big|\pm \frac{1}{a_ik_i}e^{c_0-\frac{3a_ik_i\sqrt[3]{(c_1t+c_2)^2}}{2c_1}}+c'_0\Big|}$$
with $c'_0\in \mathbb{R}$ such that $c'_0\pm \frac{\displaystyle 1}{\displaystyle a_ik_i}e^{c_0-\frac{3a_ik_i\sqrt[3]{(c_1t+c_2)^2}}{2c_1}}\neq 0$ for any $t\in I$.
In this case, according to Proposition \ref{lk} (ii), $V=\sqrt[3]{c_1t+c_2}\frac{\displaystyle d}{\displaystyle dt}+\sum_{i=2}^nk_i\frac{\displaystyle d}{\displaystyle dx_i}$ is a $2$-Killing vector field on $I\!\times_{{f_2}}\!\!I_2\times \cdots\times_{f_n}\!\!I_n$.

If $k_i=0$, then
$$f_i(t,x_i)=e^{\frac{a_ix_i+b_i}{2}}\sqrt{\Big|\pm\frac{3e^{c_0}}{2c_1}\sqrt[3]{(c_1t+c_2)^2}+c'_0\Big|}$$
with $c'_0\in \mathbb{R}$ such that $c'_0\pm\frac{\displaystyle 3e^{c_0}}{\displaystyle 2c_1}\sqrt[3]{(c_1t+c_2)^2}\neq 0$ for any $t\in I$.
In this case, according to Proposition \ref{lk} (ii), $V=\sqrt[3]{c_1t+c_2}\frac{\displaystyle d}{\displaystyle dt}$ is a $2$-Killing vector field on $I\!\times_{{f_2}}\!\!I_2\times \cdots\times_{f_n}\!\!I_n$.
\end{remark}

\begin{example}
Let $I\!\times_{{f_2}}\!\!I_2\times \cdots\times_{f_n}\!\!I_n$ be equipped with the metric $-dt^2+\sum_{i=2}^nf_i^2dx_i^2$, where $f_i:I\times I_i\rightarrow (0,\infty)$ are defined by $$f_i(t,x_i)=\sqrt[4]{e^{2x_i-3\sqrt[3]{t^2}}}.$$
Then, according to Remark \ref{poli2},
$$V=\sqrt[3]{t}\frac{\displaystyle d}{\displaystyle dt}+\sum_{i=2}^n\frac{\displaystyle d}{\displaystyle dx_i}$$
is a $2$-Killing vector field on $I\!\times_{{f_2}}\!\!I_2\times \cdots\times_{f_n}\!\!I_n$.
\end{example}

\begin{example}
Let $I\!\times_{{f_2}}\!\!I_2\times \cdots\times_{f_n}\!\!I_n$ be equipped with the metric $-dt^2+\sum_{i=2}^nf_i^2dx_i^2$, where $f_i:I\times I_i\rightarrow (0,\infty)$ are defined by $$f_i(t,x_i)=\sqrt[3]{t}\sqrt{e^{x_i}}.$$
Then, according to Remark \ref{poli2},
$$V=\sqrt[3]{t}\frac{\displaystyle d}{\displaystyle dt}$$
is a $2$-Killing vector field on $I\!\times_{{f_2}}\!\!I_2\times \cdots\times_{f_n}\!\!I_n$.
\end{example}

\begin{proposition}\label{poli5}
Let $V=c\frac{\displaystyle d}{\displaystyle dt}+\sum_{i=2}^nk_i\frac{\displaystyle d}{\displaystyle dx_i}$ with $c,k_i\in \mathbb{R}$, $i=\overline{2,n}$, be a smooth vector field on the multiply twisted product spacetime $I\!\times_{{f_2}}\!\!I_2\times \cdots\times_{f_n}\!\!I_n$ with the metric $-dt^2+\sum_{i=2}^nf_i^2dx_i^2$. Then:

(i) $V$ is a Killing vector field if and only if
$$c\frac{\displaystyle \partial f_i^2}{\displaystyle \partial t}+k_i\frac{\displaystyle \partial f_i^2}{\displaystyle \partial x_i}=0, \ \ (\forall) \ \ i=\overline{2,n};$$

(ii) if $c\neq 0$, then, $V$ is a $2$-Killing vector field if and only if the twisted functions $f_i:I\times I_i\rightarrow (0,\infty)$ satisfy
$$c\frac{\displaystyle \partial^2 f_i^2}{\displaystyle \partial t^2}+k_i\frac{\displaystyle \partial^2 f_i^2}{\displaystyle \partial t\partial x_i}=0, \ \ (\forall) \ \ i=\overline{2,n}.$$
\end{proposition}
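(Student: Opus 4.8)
The plan is to read off both assertions from the results already established for the cube-root vector fields, specialized to constant coefficients. Observe that $V=c\frac{d}{dt}+\sum_{i=2}^{n}k_i\frac{d}{dx_i}$ with $c,k_i\in\mathbb{R}$ is the field appearing in Proposition \ref{lk} (and in Corollary \ref{cor1}) for the choice $c_1=0$, $c_2=c^{3}$, $\hat c_i=0$, $\check c_i=k_i^{3}$, since then $\sqrt[3]{c_1t+c_2}=c$ and $\sqrt[3]{\hat c_ix_i+\check c_i}=k_i$ are constant. With this identification, part (i) is exactly Proposition \ref{lk}(i): the condition $c_1=0$ there is automatic, and $\sqrt[3]{c_2}=c$. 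One can also obtain (i) directly from Proposition \ref{pd}(i): writing $V_1=c\frac{d}{dt}$, $V_i=k_i\frac{d}{dx_i}$, Proposition \ref{hj} and Remark \ref{pl} give $\pounds_{V_1}g_1=0$ and $\pounds_{V_i}g_i=0$ because $c$ and $k_i$ are constant, so the first equation of Proposition \ref{pd}(i) holds automatically and the second becomes $0=-\frac{1}{f_i^{2}}\bigl(c\,\frac{\partial f_i^{2}}{\partial t}+k_i\,\frac{\partial f_i^{2}}{\partial x_i}\bigr)g_i$, i.e.\ $c\,\frac{\partial f_i^{2}}{\partial t}+k_i\,\frac{\partial f_i^{2}}{\partial x_i}=0$ since $f_i>0$ and $g_i$ is nondegenerate.

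For part (ii) I would substitute the same constants into Corollary \ref{cor1}(ii) rather than into Proposition \ref{lk}(ii): the hypothesis $-c_2/c_1\notin I$ of the latter is meaningless once $c_1=0$, whereas Corollary \ref{cor1}(ii) carries no restriction on the constants. In that equation the factor $\frac{d}{dx_i}(\sqrt[3]{\hat c_ix_i+\check c_i})=\frac{d}{dx_i}(k_i)=0$ kills the first summand, and $\frac{\partial}{\partial t}\bigl(\sqrt[3]{c_1t+c_2}\,\frac{\partial f_i^{2}}{\partial t}\bigr)=c\,\frac{\partial^{2}f_i^{2}}{\partial t^{2}}$, so what is left is $c\bigl(c\,\frac{\partial^{2}f_i^{2}}{\partial t^{2}}+k_i\,\frac{\partial^{2}f_i^{2}}{\partial t\,\partial x_i}\bigr)=0$; dividing by $c\neq0$ yields the announced PDE. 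As a cross-check, the same follows from Proposition \ref{pd}(ii): with $c,k_i$ constant one has $\pounds_{V_1}\pounds_{V_1}g_1=0$, $\pounds_{V_i}\pounds_{V_i}g_i=0$ and $\pounds_{V_i}g_i=0$, so that equation collapses to $0=-\frac{1}{f_i^{2}}\bigl(V_1(V_1(f_i^{2}))+V_1(V_i(f_i^{2}))\bigr)g_i$ with $V_1(V_1(f_i^{2}))=c^{2}\frac{\partial^{2}f_i^{2}}{\partial t^{2}}$ and $V_1(V_i(f_i^{2}))=ck_i\frac{\partial^{2}f_i^{2}}{\partial t\,\partial x_i}$.

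I do not expect a genuine obstacle: the whole argument is bookkeeping of which derivatives vanish because the components of $V$ are constant. The one point worth flagging is the hypothesis $c\neq0$ in (ii). When $c=0$ the field $V=\sum_{i=2}^{n}k_i\frac{d}{dx_i}$ makes every term of the equation of Corollary \ref{cor1}(ii) vanish identically, hence it is $2$-Killing for every admissible family of twisted functions and there is nothing to characterize; the assumption $c\neq0$ is precisely what lets us cancel the overall factor $\sqrt[3]{c_1t+c_2}=c$ and extract a genuine second-order condition on the $f_i$.
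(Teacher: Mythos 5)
Your proposal is correct and follows the same route as the paper, whose proof is precisely the one-line reduction to Corollary \ref{cor1}; your substitution $c_1=0$, $c_2=c^3$, $\hat c_i=0$, $\check c_i=k_i^3$ and the resulting simplifications are exactly the intended bookkeeping. The additional cross-check via Proposition \ref{pd} and the remark on why $c\neq 0$ is needed in (ii) are sound but not required.
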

\begin{proof}
The two assertions follow from Corollary \ref{cor1}.
\end{proof}

\begin{remark}\label{poli3}
Using the notations from Proposition \ref{poli5}, if the functions $f_i$ are of the form
$$f_i^2(t,x_i)=e^{a_ix_i+b_i}h(t)$$
with $a_i\in \mathbb{R}^*$, $b_i\in \mathbb{R}$, $i=\overline{2,n}$, and $h$ a strictly monotone positive smooth function on $I$ satisfying
$$c\frac{d^2h}{dt^2}+a_ik_i\frac{dh}{dt}=0, \ \ (\forall) \ \ i=\overline{2,n}$$
with $c\neq 0$, then,
$$\frac{\frac{\displaystyle d^2h}{\displaystyle dt^2}}{\frac{\displaystyle dh}{\displaystyle dt}}=-\frac{a_ik_i}{c},$$
which, by integration, gives
$$\ln \Big|\frac{dh}{dt}\Big|=-\frac{a_ik_i}{c}t+c_0$$
with $c_0\in \mathbb{R}$,
hence,
$$\Big|\frac{dh}{dt}\Big|=e^{c_0-\frac{a_ik_i}{c}t}.$$

If $k_i\neq 0$, we get
$$f_i(t,x_i)=e^{\frac{a_ix_i+b_i}{2}}\sqrt{\Big|\pm \frac{c}{a_ik_i}e^{c_0-\frac{a_ik_i}{c}t}+c'_0\Big|}$$
with $c'_0\in \mathbb{R}$ such that $c'_0\pm \frac{\displaystyle c}{\displaystyle a_ik_i}e^{c_0-\frac{a_ik_i}{c}t}\neq 0$ for any $t\in I$.
In this case, according to Proposition \ref{poli5} (ii), $V=c\frac{\displaystyle d}{\displaystyle dt}+\sum_{i=2}^nk_i\frac{\displaystyle d}{\displaystyle dx_i}$ is a $2$-Killing vector field on $I\!\times_{{f_2}}\!\!I_2\times \cdots\times_{f_n}\!\!I_n$.

If $k_i=0$, then
$$f_i(t,x_i)=e^{\frac{a_ix_i+b_i}{2}}\sqrt{|\pm e^{c_0}t+c'_0|}$$
with $c'_0\in \mathbb{R}$ such that $c'_0\pm e^{c_0}t\neq 0$ for any $t\in I$.
In this case, according to Proposition \ref{poli5} (ii), $V=c\frac{\displaystyle d}{\displaystyle dt}$ is a $2$-Killing vector field on $I\!\times_{{f_2}}\!\!I_2\times \cdots\times_{f_n}\!\!I_n$.
\end{remark}

\begin{example}
Let $I\!\times_{{f_2}}\!\!I_2\times \cdots\times_{f_n}\!\!I_n$ be equipped with the metric $-dt^2+\sum_{i=2}^nf_i^2dx_i^2$, where $f_i:I\times I_i\rightarrow (0,\infty)$ are defined by $$f_i(t,x_i)=\sqrt{e^{t+x_i}}.$$
Then, according to Remark \ref{poli3},
$$V=-\frac{\displaystyle d}{\displaystyle dt}+\sum_{i=2}^n\frac{\displaystyle d}{\displaystyle dx_i}$$
is a $2$-Killing vector field on $I\!\times_{{f_2}}\!\!I_2\times \cdots\times_{f_n}\!\!I_n$.
\end{example}

\begin{example}
Let $I\!\times_{{f_2}}\!\!I_2\times \cdots\times_{f_n}\!\!I_n$ (such that $0\notin I$) be equipped with the metric $-dt^2+\sum_{i=2}^nf_i^2dx_i^2$, where $f_i:I\times I_i\rightarrow (0,\infty)$ are defined by $$f_i(t,x_i)=\sqrt{|t|e^{x_i}}.$$
Then, according to Remark \ref{poli3},
$$V=c\frac{\displaystyle d}{\displaystyle dt}$$
with $c\in \mathbb{R}^*$ is a $2$-Killing vector field on $I\!\times_{{f_2}}\!\!I_2\times \cdots\times_{f_n}\!\!I_n$.
\end{example}

\begin{proposition}\label{poli4}
Let $V=\sum_{i=2}^nk_i\frac{\displaystyle d}{\displaystyle dx_i}$ with $k_i\in \mathbb{R}$, $i=\overline{2,n}$, be a smooth nonzero vector field on the multiply twisted product spacetime $I\!\times_{{f_2}}\!\!I_2\times \cdots\times_{f_n}\!\!I_n$ with the metric $-dt^2+\sum_{i=2}^nf_i^2dx_i^2$. Then:

(i) $V$ is a Killing vector field if and only if
$$k_i\frac{\displaystyle \partial f_i^2}{\displaystyle \partial x_i}=0, \ \ (\forall) \ \ i=\overline{2,n};$$
in particular, if $k_i\neq 0$ for any $i=\overline{2,n}$, then $V$ is a Killing vector field if and only if the manifold is a multiply warped product spacetime;

(ii) $V$ is a $2$-Killing vector field.
\end{proposition}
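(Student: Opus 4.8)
The plan is to specialize the general descriptions in Proposition~\ref{pd} (equivalently Theorem~\ref{t}) together with Remark~\ref{pl} to this particular $V$, which has vanishing $I$-component. Writing $V=\sum_{i=1}^n V_i$ with $V_1=0$ and $V_i=k_i\frac{d}{dx_i}$, I would first record the two elementary facts that make everything collapse: since each $k_i\in\mathbb{R}$ is constant, Remark~\ref{pl} applied on $(I_i,dx_i^2)$ gives $\pounds_{V_i}g_i=2\frac{dk_i}{dx_i}\,dx_i^2=0$ and $\pounds_{V_i}\pounds_{V_i}g_i=2\{k_i\frac{d^2k_i}{dx_i^2}+2(\frac{dk_i}{dx_i})^2\}\,dx_i^2=0$; and since $V_1=0$ annihilates every smooth function, $V_1(f_i^2)=V_1(V_1(f_i^2))=V_1(V_i(f_i^2))=0$, while $V_i(f_i^2)=k_i\frac{\partial f_i^2}{\partial x_i}$.

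For part (i) I would substitute these into Proposition~\ref{pd}(i): the equation $\pounds_{V_1}g_1=0$ is automatic, and the remaining equations reduce to $0=-\frac{1}{f_i^2}k_i\frac{\partial f_i^2}{\partial x_i}\,g_i$, i.e. $k_i\frac{\partial f_i^2}{\partial x_i}=0$ for every $i=\overline{2,n}$, which is the claimed criterion. For the ``in particular'' clause, if all $k_i\neq 0$ this forces $\frac{\partial f_i^2}{\partial x_i}=0$, hence (the $f_i$ being positive) each $f_i$ depends only on $t\in I$; but a multiply twisted product whose twisted functions are all functions on $M_1=I$ alone is precisely a multiply warped product spacetime in the sense recalled in Section~2, and conversely. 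So I would close part (i) by invoking that definition.

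For part (ii) I would substitute into Proposition~\ref{pd}(ii): again $\pounds_{V_1}\pounds_{V_1}g_1=0$ is automatic; in the remaining equations the factor $\pounds_{V_i}g_i$ in front of the first right-hand term vanishes, and the numerator $V_1(V_1(f_i^2))+V_1(V_i(f_i^2))$ of the second term vanishes, so the whole right-hand side is $0$, while the left-hand side $\pounds_{V_i}\pounds_{V_i}g_i$ is also $0$. Hence the system holds identically, with no condition on the $f_i$, and $V$ is a $2$-Killing vector field.

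I do not expect a genuine obstacle here: the mechanism is just that a fiber-tangent field with constant fiberwise coefficients kills both $\pounds_{V_i}g_i$ and $\pounds_{V_i}\pounds_{V_i}g_i$, while $V_1=0$ erases every term built from base-derivatives of the $f_i$ in the $2$-Killing system. The only step deserving a line of prose is the ``in particular'' remark in (i), where one must identify ``$\frac{\partial f_i^2}{\partial x_i}=0$ for all $i$'' with the assertion that the multiply twisted product has degenerated to a multiply warped product.
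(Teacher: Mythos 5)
Your proof is correct and follows essentially the same route as the paper: the paper simply cites Corollary~\ref{cor1} (with $c_1=c_2=\hat c_i=0$ and $\check c_i=k_i^3$), while you substitute $V_1=0$ and constant $V_i=k_i\frac{d}{dx_i}$ one step earlier, directly into Proposition~\ref{pd} together with Remark~\ref{pl}; the resulting computation is identical in substance. Your explicit treatment of the ``in particular'' clause in (i) is a sensible addition, since the paper leaves that identification with the multiply warped product definition implicit.
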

\begin{proof}
The two assertions follow from Corollary \ref{cor1}.
\end{proof}

\begin{example}
Let $I\!\times_{{f_2}}\!\!I_2\times \cdots\times_{f_n}\!\!I_n$ be equipped with the metric $-dt^2+\sum_{i=2}^nf_i^2dx_i^2$. Then, according to Proposition \ref{poli4} (ii), for any twisted functions
$f_i:I\times I_i\rightarrow (0,\infty)$, the vector field
$$V=\sum_{i=2}^nk_i\frac{\displaystyle d}{\displaystyle dx_i}$$
with $k_i\in \mathbb{R}$, $i=\overline{2,n}$, is a $2$-Killing vector field on $I\!\times_{{f_2}}\!\!I_2\times \cdots\times_{f_n}\!\!I_n$.
\end{example}

\begin{remark}
Let $V=c\frac{\displaystyle d}{\displaystyle dt}+\sum_{i=2}^nk_i\frac{\displaystyle d}{\displaystyle dx_i}$ with $c,k_i\in \mathbb{R}^*$, $i=\overline{2,n}$, be a smooth vector field on the multiply twisted product spacetime $I\!\times_{{f_2}}\!\!I_2\times \cdots\times_{f_n}\!\!I_n$ with the metric $-dt^2+\sum_{i=2}^nf_i^2dx_i^2$. If
$f_i:I\times I_i\rightarrow (0,\infty)$, $f_i(t,x_i)=c_ie^{\frac{a_i(cx_i-k_it)}{c}}$ with $a_i\in \mathbb{R}^*$, $c_i\in \mathbb{R}^*_+$, then, according to Proposition \ref{poli5} (i), $V$ is a Killing vector field on $I\!\times_{{f_2}}\!\!I_2\times \cdots\times_{f_n}\!\!I_n$.
\end{remark}

\section*{Conclusions}

The paper completes some very recent studies in this direction by characterizing the $2$-Killing vector fields on one of the most general manifolds of warped product-type defined so far, namely, on multiply twisted products. By considering some particular cases, it constitutes a source of concrete examples of multiply twisted product spacetimes endowed with a $2$-Killing vector field, that could be of interest not only to differential geometers and equationists, but to theoretical physicists, as well.

%\section*{Statements and Declarations}

%\textbf{Funding.} No financial support was received for this manuscript.\\
%\textbf{Competing interests.} The author has no competing interests to declare that are relevant to the content of this article.\\
%\textbf{Data availability.} Data sharing is not applicable to this article as no new data were created or analyzed in this study.

\end{document}